\documentclass[a4paper,11pt,reqno]{amsart}

\usepackage[utf8]{inputenc}
\usepackage[left=2.5cm,right=2.5cm,top=3cm, bottom=3cm]{geometry}
\usepackage{amsmath, amssymb, mathrsfs, amsthm}
\usepackage{tikz, tikz-cd}
\usepackage{mathtools}
\usepackage{graphicx}
\usepackage{quiver}
\usepackage[hidelinks]{hyperref}
\usepackage{stmaryrd} 
\usepackage[style=alphabetic,sorting=nyt]{biblatex}
\usepackage{url}
\usepackage{cleveref}

\usepackage{verbatim} 
\usepackage{todonotes}

\def\Spec{\operatorname{Spec}}

\newcommand{\cY}{\mathcal{Y}}
\newcommand{\cX}{\mathcal{X}}

\newtheoremstyle{mytheorem}
{}
{}
{\itshape}
{}
{\scshape}
{:}
{.5em}
{}%

\newtheoremstyle{mydefi}
{}
{}
{}
{}
{\scshape}
{:}
{.5em}
{}%

\theoremstyle{mytheorem}
\newtheorem{teo}{Theorem}
\numberwithin{teo}{section}

\theoremstyle{mydefi}
\newtheorem{ex/}[teo]{Example}

\newtheorem{rmk/}[teo]{Remark}
\newenvironment{rmk}
  {%
   \pushQED{\qed}\begin{rmk/}}
  {\popQED\end{rmk/}}

\newtheorem{defi/}[teo]{Definition}
\newenvironment{defi}
  {%
   \pushQED{\qed}\begin{defi/}}
  {\popQED\end{defi/}}

\newtheorem{ass/}[teo]{Assumption}

\makeatletter
\renewenvironment{proof}[1][\proofname]{\par
  \vspace{-\topsep+6pt}
  \pushQED{\qed}%
  \normalfont
  \topsep0pt \partopsep0pt 
  \trivlist
  \item[\hskip\labelsep
        \scshape
    #1\@addpunct{.}]\ignorespaces
}{%
  \popQED\endtrivlist\@endpefalse
  \addvspace{6pt plus 6pt} 
}
\usepackage{pdfpages}
\usepackage{mathpazo}
\makeatother
\title{Fundamental groups of proper algebraic stacks are finitely presented.}
\date{\today}
\author{Chirantan Chowdhury}
\address{Technische Universit\"at Darmstadt, Fachbereich Mathematik, S215, Schlossgartenstr 7, 64289 Darmstadt}
\email{chowdhury@mathematik.tu-darmstadt.de}

\author{Ludvig Modin}
\address{Leibniz Universität Hannover, Institut für Algebraische Geometrie, Welfengarten 1, 30167 Hannover}
\email{modin@math.uni-hannover.de}

\begin{document}

\begin{abstract}
    In \cite{Propervarietyfinitepresented}, it was proven that the \'etale fundamental group of a proper scheme over an algebraically closed field is topologically finitely presented. 
    Building on this, we show that the \'etale fundamental group, as introduced in \cite{Noohi2002FUNDAMENTALGO}, of a proper algebraic stack with finite inertia over an algebraically closed field is topologically finitely presented. 
\end{abstract}

\maketitle{}

\section{Introduction.}
When the \'etale fundamental group of a scheme was introduced in \cite{SGA1}, it was shown that it is topologically finitely generated if the scheme is proper over an algebraically closed field \cite[Exp X, Theoreme 2.9]{SGA1}. Moreover, in characteristic \(0\) it was shown to be topologically finitely presented for any scheme of finite type over an algebraically closed field. This follows from the \'etale fundamental group of a complex scheme being the profinite completion of the classical fundamental group as is outlined in \cite[Exp. IX, Remarque 5.7]{SGA1}. Over an algebraically closed field of characteristic \(p>0\), \(\pi_1^{et}(\mathbb{A}^1)\) fails to be topologically finitely generated, due to the existence of Artin-Schreier covers. 
The question of topological finite presentation for proper schemes over algebraically closed fields of positive characteristic, raised in \cite[Exp. IX, Remarque 5.7]{SGA1}, was left open until recently. It was resolved for smooth projective curves by Shusterman in \cite{FinPressCurves}, and in \cite[Theorem 1]{FinitepresentationTame}, Esnault, Shusterman and Srinivas showed that the
\'etale fundamental group of a smooth projective variety is topologically finitely presented. Building on this result together with de Jong's alteration theorem \cite[Theorem 4.1]{Alterations-deJong} and descent results developed in \cite{SGA1}, Lara, Srinivas and Stix proved the following.

\begin{teo}\cite[Theorem 1.2]{Propervarietyfinitepresented}\label{theorem: propervarietiesfinitepresentation}
    Let $X$ be a connected scheme that is proper over an algebraically closed field $k$. Then $\pi^{et}_1(X)$ is topologically finitely presented. 
    \end{teo}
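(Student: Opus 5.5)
The plan is to follow the strategy of Lara, Srinivas and Stix: reduce to the smooth projective case, treated by Esnault, Shusterman and Srinivas \cite[Theorem 1]{FinitepresentationTame}, via de Jong's alterations and effective descent for finite \'etale covers. We may assume $X$ is reduced, since $\pi_1^{et}$ is insensitive to nilpotents. Working one irreducible component at a time and then gluing along the intersection pattern, we first treat the case where $X$ is integral. By \cite[Theorem 4.1]{Alterations-deJong} there is a proper surjective, generically finite map $h\colon X'\to X$ with $X'$ regular and projective over $k$. We may take $X'$ connected, or else work with a finite disjoint union of smooth projective varieties. Each such component has topologically finitely presented fundamental group by \cite[Theorem 1]{FinitepresentationTame}.

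Next I use that proper surjective maps are morphisms of effective descent for the fibered category of finite \'etale covers \cite[Exp.~IX]{SGA1}. This gives a description of the Galois category $\mathrm{F\acute Et}(X)$ as descent data on $X'$ relative to $X'\times_X X'$ and $X'\times_X X'\times_X X'$. Translated into groups, $\pi_1(X)$ is a quotient of a free profinite product. The ingredients are:
\begin{itemize}
\item the groups $\pi_1$ of the connected components of $X'$;
\item a free profinite group on generators indexed by the components of $X'\times_X X'$ (the ``loops'' of the descent graph), modulo the fundamental group of the graph of components being accounted for;
\item relations identifying the two images of $\pi_1$ of each component of $X'\times_X X'$;
\item cocycle relations indexed by the components of $X'\times_X X'\times_X X'$.
\end{itemize}
This is the van Kampen-type formula of \cite[Exp.~IX, \S5]{SGA1}.

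For finite presentation, the key facts are these. The pieces $\pi_1(X'_i)$ are finitely presented. There are finitely many components of the double and triple fiber products, since these are of finite type over $k$. For the relations coming from $X'\times_X X'$, we only need \emph{generators} of $\pi_1$ of each component, and these are finitely generated by \cite[Exp X, Theoreme 2.9]{SGA1} because the components are proper over $k$. One must check the following: with a finitely presented ``total'' group on the first level, finitely many generators for the double-overlap groups, and finitely many triple-overlap relations, the resulting profinite quotient is finitely presented. This is the profinite analogue of van Kampen, and it works provided we avoid needing presentations of the double-overlap groups themselves.

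The main obstacle I expect is the non-integral or non-normal gluing step, and making the descent formula honestly profinite. Quotients of finitely presented profinite groups by closed normal subgroups generated by finitely many elements are finitely presented. The real problem is that the double overlaps need only be finitely generated, but they are not connected. The base-point bookkeeping (choosing a maximal tree in the descent graph) is where care is needed. If necessary, I would first refine $X'\to X$ by a further alteration so that it is Galois generically (de Jong's Galois alterations), to control the components. For the reducible case, I would apply the same descent with $X'=\coprod X_i$ the normalization components, reducing inductively to the integral case already treated.
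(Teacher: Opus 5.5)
Your proposal is correct and follows the route the paper attributes to Lara--Srinivas--Stix, which it also reuses in the proof of \Cref{Theorem: finite presentation for proper alg sp}: a de Jong alteration of $X_{\mathrm{red}}$ (one per irreducible component) by smooth projective varieties, effective descent for finite \'etale covers along this proper surjection \cite[Exp.~IX]{SGA1}, and the van Kampen criterion \cite[Exp.~IX, Corollaire 5.3]{SGA1}, applied with \cite[Theorem 1]{FinitepresentationTame} on the components of $X'$ and only finite generation \cite[Exp X, Theoreme 2.9]{SGA1} on the components of $X'\times_X X'$. The profinite bookkeeping you flag as the remaining check is exactly what that corollary provides, so your separate treatment of the reducible case and the Galois-alteration refinement are unnecessary.
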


The goal of this article is to generalize this result to \'etale fundamental groups of algebraic stacks, as introduced by Noohi in \cite{Noohi2002FUNDAMENTALGO}. 
\begin{teo}
    Let \(\mathcal{X}\) be a connected algebraic stack with finite inertia that is proper over an algebraically closed field $k$. Then \(\pi_1^{et}(\mathcal{X})\) is topologically finitely presented.
\end{teo}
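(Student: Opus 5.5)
Since \(\mathcal{X}\) has finite inertia, the Keel--Mori theorem gives a coarse moduli space \(\pi\colon \mathcal{X}\to X\), with \(X\) a separated algebraic space, proper over \(k\) because \(\mathcal{X}\) is. The plan is to compare \(\pi_1^{et}(\mathcal{X})\) with fundamental groups of schemes, where Theorem \ref{theorem: propervarietiesfinitepresentation} applies, using two inputs:
\begin{enumerate}
\item Theorem \ref{theorem: propervarietiesfinitepresentation} itself, extended from schemes to proper algebraic spaces.
\item A description of \(\pi_1^{et}(\mathcal{X})\) as a quotient of a finitely presented group by finitely many normal generators.
\end{enumerate}
Recall that a quotient of a topologically finitely presented profinite group by the closed normal subgroup generated by finitely many elements is again topologically finitely presented.

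\textbf{Step 1: algebraic spaces.} By Chow's lemma there is a proper birational surjection \(X'\to X\) with \(X'\) a proper scheme. Alternatively, and more robustly, use a proper hypercovering, or a proper surjection from a proper scheme. Proper surjections are morphisms of effective descent for finite étale covers (SGA1, Exp. IX). This should yield a presentation
\[\pi_1(X)\cong \pi_1(X'_0)\big/\langle\!\langle \text{relations from } X'_0\times_X X'_0 \text{ and triple products}\rangle\!\rangle.\]
Here one needs two facts. The fiber products have finitely many connected components. The relations come from \(\pi_1\) of these components, which are finitely generated by SGA1 Exp. X. So only finitely many generators and relations appear, and \(\pi_1(X)\) is finitely presented. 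This is essentially the argument of Lara--Srinivas--Stix, and it should go through verbatim for algebraic spaces.

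\textbf{Step 2: from \(X\) to \(\mathcal{X}\).} The natural map \(\pi_1(\mathcal{X})\to\pi_1(X)\) is surjective, because \(\pi\) is a universal homeomorphism onto \(X\) with connected geometric fibers. I would describe its kernel by Noohi-type results: it is the closed normal subgroup generated by the images of the inertia groups \(I_x\to\pi_1(\mathcal{X})\), as \(x\) ranges over geometric points. To make this finite, choose a finite stratification of \(X\) into locally closed pieces over which the inertia is locally constant in a suitable sense, e.g. flat with finite étale connected components. Alternatively, work étale locally: \(\mathcal{X}\) is étale locally on \(X\) of the form \([U/G_x]\) with \(G_x\) the finite stabilizer. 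This requires the tame/linearly reductive local structure or the Keel--Mori étale local description \([V/G]\), valid for finite inertia.

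An alternative cleaner route avoids inertia generators altogether. Take a finite étale-local cover and use descent again: choose a finite surjective map from a scheme, \(Z\to\mathcal{X}\), with \(Z\) proper. Such a map exists by a result of Rydh for stacks with finite inertia (Edidin--Hassett--Kresch--Vistoli in the quotient case). Proper surjective morphisms of stacks satisfy effective descent for finite étale covers. The argument of Step 1 then gives \(\pi_1(\mathcal{X})\) as a quotient of \(\pi_1(Z_0)\) by finitely many normal generators, together with finitely many extra generators from the components, with relations from \(Z\times_{\mathcal{X}}Z\) and \(Z\times_{\mathcal{X}}Z\times_{\mathcal{X}}Z\). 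These fiber products are proper algebraic spaces, since \(\mathcal{X}\) has finite, hence proper, diagonal. By Step 1 their fundamental groups are finitely generated, and in fact finitely presented. I would take this route.

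\textbf{Main obstacle.} The crux is establishing the descent-theoretic presentation for stacks. Concretely, I must show that a proper surjection \(Z\to\mathcal{X}\) from a scheme is of effective descent for the Galois category of finite étale covers in Noohi's sense. I would then translate this into the ``van Kampen'' style exact description with base points and paths (SGA1 Exp. IX, Thm. 5.1 style), carefully tracking connected components and the fact that the fiber products are only algebraic spaces. I would try first to reduce effective descent to the scheme case via a smooth atlas \(U\to\mathcal{X}\), base changing \(Z\): descent along the proper surjection \(Z_U\to U\) is known, and the remaining gluing is stack-theoretic formalism.
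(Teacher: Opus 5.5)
Your committed route (the second alternative in Step~2) is correct, and it differs genuinely from the paper's. The paper never applies descent to the stack itself. It uses Noohi's Theorem~11.4 to pass to a connected finite \'etale cover $\mathcal{Y}\to\mathcal{X}$ on which every $\omega_y$ has trivial image. It then uses Keel--Mori and Noohi's Theorem~7.11 to identify $\pi_1^{et}(\mathcal{Y})$ with $\pi_1^{et}(Y)$ for the proper coarse space $Y$, proves the algebraic-space case exactly as in your Step~1, and lifts finite presentation from the open subgroup $\pi_1^{et}(\mathcal{Y})$ to $\pi_1^{et}(\mathcal{X})$.

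You instead run the SGA1~IX~\S5 presentation once, directly along a finite surjection $Z\to\mathcal{X}$ from a proper scheme. The ingredients check out:
\begin{enumerate}
\item Finite inertia together with the proper diagonal of a proper stack gives a finite, hence quasi-finite and separated, diagonal, which is the hypothesis Rydh's theorem actually needs.
\item Since $Z\to\mathcal{X}$ is finite, $Z\times_{\mathcal{X}}Z$ and the triple product are finite over $Z$, hence proper \emph{schemes}. So Theorem~\ref{theorem: propervarietiesfinitepresentation} and SGA1~X~2.9 apply to them directly, and your Step~1 is not even needed.
\item Your atlas argument for effective descent is the standard one: descend along $Z_U\to U$ by SGA1~IX~4.12, then glue over $R=U\times_{\mathcal{X}}U$ using full faithfulness of pullback along $Z\times_{\mathcal{X}}R\to R$.
\item The base-point bookkeeping is harmless. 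For a representable cover, Noohi's $F_x(\mathcal{Y})$ is just the geometric fibre over $x$, so $F_z\circ f^*\cong F_{f(z)}$.
\end{enumerate}
Your route avoids hidden paths, the coarse-space comparison, Keel--Mori and the open-subgroup lemma, and it uses finite inertia only to produce the cover. The paper's route is shorter because every step is a citation, and it applies descent only over an algebraic-space base. Note that even there the paper tacitly extends SGA1~IX~4.12 beyond schemes, just as you must for stacks.

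One caution about the first sketch in your Step~2, which you rightly set aside: the fact you recalled points the wrong way there. It says that a quotient of a finitely presented group by finitely many normal generators is finitely presented. It does not let you recover finite presentation of $\pi_1^{et}(\mathcal{X})$ from finite presentation of its quotient $\pi_1^{et}(X)$ plus finitely many normal generators of the kernel $N$. The paper avoids having to control $N$ at all by passing to a cover on which $N$ is trivial.
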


\begin{rmk}
    The assumption of finite inertia on a proper stack is equivalent to the assumption that the stack has affine stabilizers. Indeed, the diagonal of a proper stack is proper, so \(\mathcal{I}_{\mathcal{X}}\to \mathcal{X}\) is proper, and a proper morphism with affine fibers is finite.    
\end{rmk}

The proof uses structural results of \(\pi_1^{et}(\mathcal{X})\) developed in Noohi in \cite{Noohi2002FUNDAMENTALGO} and the Keel-Mori theorem to reduce to proving finite presentation for the \'etale fundamental groups of proper algebraic spaces. We finish the argument by further reducing to the case of proper schemes, using Chow's lemma and descent for finite \'etale covers along surjective proper morphisms of finite type, the same descent results used in \cite{Propervarietyfinitepresented}.

\section*{Acknowledgements}

The authors would like to thank the organizers of the "Summer school on Recent Developments in Arithmetic and Algebraic Geometry in Positive Characteristic" at Bergische Universit\"at Wuppertal, for  providing a nice environment where the article was written. We also want to thank Jochen Heinloth and Dario Wei{\ss}man for helpful comments on a first draft of the article.\\

C. Chowdhury acknowledges support by the European Research Council (ERC) under Horizon Europe (grant agreement nº 101040935), by the Deutsche Forschungsgemeinschaft (DFG, German Research Foundation) TRR 326 \textit{Geometry and Arithmetic of Uniformized Structures}, project number 444845124, and the LOEWE professorship in Algebra, project number LOEWE \\ /4b//519/05/01.002(0004)/87.

L. Modin is currently a postdoctoral researcher at the Institut f\"ur Algebraische Geometrie in Leibniz Universit\"at Hannover.

\section{Preliminaries on \'Etale Fundamental Groups.}
We recall the definition of \'etale fundamental groups of algebraic stacks and some of their important properties due to Noohi(\cite{Noohi2002FUNDAMENTALGO}). 
Let $X$ be a connected scheme over an algebraically closed field $k$. Then for a geometric point $x \in X$, the \'etale fundamental group $\pi_1^{et}(X,x)$ is defined as the automorphism group of the fiber functor ${F_x:\operatorname{F\acute{E}t} \to \mathrm{Sets}}$, taking a finite \'etale covering \(Y\to X\) to the set of points in the fiber of \(x\).

In \cite{Noohi2002FUNDAMENTALGO}, Noohi similarly uses Galois categories and fundamental functors to define fundamental groups for algebraic stacks. The key difference from the case of schemes is that one has to account for points of a stack being isomorphic, but not equal. 
To deal with this Noohi introduced the notion of hidden paths between points of a stack. We recall the key objects and properties. 

\begin{defi}\cite[Definition 3.1]{Noohi2002FUNDAMENTALGO}
Let $x,x': \Spec k \to \cX$ be two geometric points of an algebraic stack \(\mathcal{X}\). A hidden path from $x$ to $x'$, denoted $x \rightsquigarrow x'$, is a natural transformation, i.e a morphism in the category $\operatorname{Fun}(\Spec k ,\cX)$. The hidden fundamental groupoid of $\cX$, denoted by $\Pi_1^h(\cX)$ is defined as follows:
\begin{center}
   $ \operatorname{Ob}(\Pi^h_1(\cX)) = \{x : \Spec k \to \cX\} $\\
    $\operatorname{Mor}(x,x')= \{x \rightsquigarrow x'\} $
\end{center}
    The \textit{hidden fundamental group} of a geometric point $x: \Spec k \to \cX$, denoted by $\pi_1^h(\cX,x)$, is defined as the automorphism group of the object $x \in \Pi_1^h(\cX)$.
\end{defi}
\begin{rmk}
    Note that the hidden fundamental groupoid is the groupoid of \(k\)-points of \(\mathcal{X}\) and the hidden fundamental group of a point \(x\in \mathcal{X}(k)\) is the \(k\)-points of the stabilizer group of \(x\), in particular, if \(\mathcal{X}\) has finite inertia, then the hidden fundamental group at any point is finite. In general \(\pi_1^h(\mathcal{X},x)\) is not profinite, for example if \(\mathcal{X}=B\mathbb{G}_m\), then \(\pi_1^h(\mathcal{X},x)=k^*\). 
\end{rmk}
\begin{defi}\cite[Definition 4.1]{Noohi2002FUNDAMENTALGO}
Let $\cX$ be a connected algebraic stack and let $x: \Spec k \to \cX$ be a geometric point.
\begin{enumerate}
    \item A morphism of connected algebraic stacks $f: \cY  \to \cX$ is a \textit{covering space} if $f$ is finite representable \'etale map.

    \item The Galois category $\mathbf{C}_{\cX}$ and the fundamental functor $F_x$ are defined as follows:
    \begin{enumerate}
        \item  The Galois category $\mathbf{C}_{\cX}$ is the  associated $1$-category of the $(2,1)$- category of covering spaces over $\cX$ by declaring the $2$-morphisms to be identity.
        \item The functor \begin{equation}
            F_x : \mathbf{C}_{\cX} \to \operatorname{Sets}
        \end{equation}
        is defined by 
        \begin{equation}
            F_x(\cY) = \{(y,\phi) \,|\, y :\Spec k \to \cY \, ,\, \phi:  x \rightsquigarrow f(y) \}/ \sim
        \end{equation}
        where  $\sim$ is defined  by 
        \begin{equation}
            (y,\phi) \sim (y',\phi') \,\, \text{if}\, \exists \beta : y \rightsquigarrow y' \, \text{such that} \, f(\beta) \circ \phi = \phi'.
        \end{equation}
         \end{enumerate}
        \item  In a similar way as $\Pi^h_1(\cX)$, one defines the \textit{fundamental groupoid}, denoted by \(\Pi^1(\mathcal{X})\), of $\cX$ as the groupoid with objects being geometric points of $\cX$ and morphisms being natural transformations $F_x \to F_{x'}$. The \textit{fundamental group} of the algebraic stack $\cX$ at a geometric point $x : \Spec k \to \cX$, denoted by $\pi_1^{et}(\cX,x)$, is the automorphism group of the object $x$ in $\Pi^1(\cX)$.
        \item Given two points \(x,x'\), a morphism from \(x'\) to \(x\) in \(\Pi_1(\mathcal{X})\) is called a \emph{path} from \(x'\) to \(x\), denoted by \(x'\to x\).
    
    \end{enumerate}
\end{defi}

\begin{rmk}
\begin{enumerate}
\item Similar to the case of schemes, $\pi_1^{et}(\cX,x)$ is a profinite group and if \(\mathcal{X}\) is connected then \(\Pi_1(\mathcal{X})\) is a connected groupoid, that is, for every pair of points \(x,x'\in \mathcal{X}\) there is a path \(x'\to x\). Any such path induces an isomorphism $\pi_1^{et}(\cX,x')\cong \pi_1^{et}(\cX,x)$.
\item There is a natural morphism
\begin{equation}
    \Pi^h_1(\cX) \to \Pi_1(\cX)
\end{equation}
given by identity on the level of objects and on the level morphisms, a hidden path $\gamma: x \rightsquigarrow x'$ is mapped to the natural transformation $F_x \to F_{x'}$, which, when evaluated at covering $\cY\to \mathcal{X} $ is the function
\begin{equation}
    F_x(\cY) \to F_{x'}(\cY) \,\, (y,\phi) \mapsto (y,\gamma^{-1}\phi).
 \end{equation} 
 In particular, to every hidden path there is an associated (potentially trivial) path.
 This induces a homomorphism (of abstract groups) on the fundamental groups 
 \begin{equation}
     \omega_x : \pi^h_1(\cX,x) \to \pi_1^{et}(\cX,x).
 \end{equation}
 \item For $\cX = BG$ over $k = k^{\operatorname{sep}}$ and $G$ an affine algebraic group of finite type over $k$, Noohi (\cite[Example 4.2]{Noohi2002FUNDAMENTALGO}) shows that $\pi_1^{et}(\cX,x)= G/G^0$, $\pi^h_1(\cX,x)= G(k)$ and the map $\omega_x$ is the canonical quotient map.
  \end{enumerate}  
\end{rmk}

In the case of an algebraic stack $\cX$ admitting a coarse moduli space $X_{\operatorname{mod}}$, one can describe $\pi^{et}_1(X,x)$ in terms of $\pi^{et}_1(\cX,x)$ and the hidden paths of \(\mathcal{X}\). For $x,x' : \Spec k \to \cX$, define 
\begin{equation}
    B_{x'}:= \cup_{\text{paths}\, x' \to x} \operatorname{im}(\pi^h_1(\cX,x') \xrightarrow{\omega_x} \pi^{et}_1(\cX,x') \xrightarrow{\cong}\pi_1^{et}(\cX,x) )
\end{equation}
and let \(N\) be the closure of the subgroup generated by all \(B_x'\)'s as \(x'\) ranges over \(\mathcal{X}(k)\).
This is a normal subgroup of $\pi_1^{et}(\cX,x)$ that maps to zero in $\pi_1^{et}(X_{\operatorname{mod}},x)$. It is in fact the kernel of this map.
\begin{teo}\cite[Theorem 7.11]{Noohi2002FUNDAMENTALGO}\label{theorem: relation to cms}
    The natural map $\pi_1^{et}(\cX,x)/N \to \pi_1^{et}(X_{\operatorname{mod}},x)$ is an isomorphism.
\end{teo}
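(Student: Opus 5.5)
The plan is to prove the statement with the Galois correspondence for the two Galois categories $\mathbf{C}_{\cX}$ and $\mathbf{C}_{X_{\operatorname{mod}}}$, linked by the functor $\pi^*$ that pulls back covers along the coarse moduli map $\pi:\cX\to X_{\operatorname{mod}}$. Here \(\mathcal{X}\) is assumed to have finite inertia, so that \(X_{\operatorname{mod}}\) exists by Keel--Mori. By the standard dictionary for Galois categories, it suffices to prove two things:
\begin{enumerate}
\item $\pi^*$ sends connected covers to connected covers. This gives surjectivity of $\pi_1^{et}(\cX,x)\to\pi_1^{et}(X_{\operatorname{mod}},x)$.
\item The essential image of $\pi^*$, closed under subquotients, consists exactly of the covers $\cY\to\cX$ on whose fiber $F_x(\cY)$ the subgroup $N$ acts trivially.
\end{enumerate}

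For (1), I use that $\pi$ is a universal homeomorphism onto $X_{\operatorname{mod}}$: it is proper, surjective and bijective on geometric points, and this persists after base change along the étale map $Y\to X_{\operatorname{mod}}$. Hence $\pi^{-1}(Y)$ is connected whenever $Y$ is.

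For the easy half of (2), a hidden path $\gamma\in\pi_1^h(\cX,x')$ acts on $F_{x'}(\pi^*Y)$ through its image in the automorphism group of the point $\pi(x')$ of the algebraic space $Y\to X_{\operatorname{mod}}$, and that group is trivial. Transporting along any path $x'\to x$ shows that every $B_{x'}$, and hence the closed subgroup $N$, acts trivially. So $N$ lies in the kernel.

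The substantive step is the converse: a connected cover $f:\cY\to\cX$ on which $N$ acts trivially descends to $X_{\operatorname{mod}}$. First I translate the hypothesis into geometry. Triviality of the $B_{x'}$-action for every $x'$ says that for every geometric point $y$ of $\cY$ the map of stabilizers $G_y\to G_{f(y)}$ is surjective, hence an isomorphism since $f$ is representable. Equivalently, $\mathcal{I}_{\cY}\to \mathcal{I}_{\cX}\times_{\cX}\cY$ is an isomorphism. The candidate descent is the coarse space $Y_{\operatorname{mod}}$ of $\cY$, and I must show:
\begin{enumerate}
\item $Y_{\operatorname{mod}}\to X_{\operatorname{mod}}$ is finite étale;
\item the natural map $\cY\to \cX\times_{X_{\operatorname{mod}}}Y_{\operatorname{mod}}$ is an isomorphism.
\end{enumerate}
Both questions are étale local on $X_{\operatorname{mod}}$, and formation of coarse spaces commutes with flat base change on $X_{\operatorname{mod}}$ (Keel--Mori). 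So I invoke the local structure theorem: étale locally around the image of a point $x'$, $\cX\cong[V/G]$ with $G$ the (finite) stabilizer of $x'$ acting on an affine $V$ with a fixed point, and $X_{\operatorname{mod}}=V/G$. Then $\cY=[W/G]$ with $W\to V$ finite étale and $G$-equivariant. The stabilizer condition says $G_w=G_{v}$ for $w\mapsto v$.

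The main obstacle is now to show that $W/G\to V/G$ is étale and $W\cong V\times_{V/G}W/G$. My first attempt is to shrink to the fixed point $v$: the fiber over $v$ is a finite set on which $G=G_v$ acts trivially. After strict henselization, $W$ splits as a disjoint union of copies of $V$, each stable under $G$ by the triviality of the action, so $W/G$ is a disjoint union of copies of $V/G$ and both claims follow. If $G$ is a non-reduced finite group scheme (wild, non-DM case), I would instead argue with the inertia isomorphism directly: $\cY\to\cX\times_{X_{\operatorname{mod}}}Y_{\operatorname{mod}}$ is then a representable map that induces isomorphisms on stabilizers and is bijective on geometric points. Together with finite étaleness of $\cY\to\cX$, this should make it étale, proper and degree one, hence an isomorphism.
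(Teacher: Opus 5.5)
The paper does not prove this statement; it cites it from Noohi. Taken on its own, most of your outline is sound:
\begin{itemize}
\item surjectivity follows because $\pi$ is a universal homeomorphism;
\item $N\subseteq\ker$ holds because a point of $\cX\times_{X_{\operatorname{mod}}}Y$ has the same automorphism group as its image in $\cX$;
\item for Deligne--Mumford or tame $\cX$, the strict-henselization argument in a local model $[V/G]$ works.
\end{itemize}

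\textbf{The gap} is in the remaining case, and that case is exactly the generality the paper needs: the main theorem applies this result to an arbitrary proper $\cY$ with finite inertia. The local structure $\cX\simeq[V/G_{x'}]$, étale locally on $X_{\operatorname{mod}}$, exists for étale stabilizers and, by Abramovich--Olsson--Vistoli, for linearly reductive ones. It fails for general finite inertia. For example, take the Oort--Tate family $G=\{x^p=tx\}\subset\mathbb{G}_a$ over $\mathbb{A}^1_k$; an analogous family over $\mathbb{P}^1$ gives a proper example. Then $BG$ has stabilizer $\alpha_p$ at $t=0$ and $\mathbb{Z}/p$ at $t\neq 0$. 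So no étale neighbourhood of $0$ has the form $[V/\alpha_p]$, since all stabilizers of such a quotient are infinitesimal.

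Your fallback misplaces the difficulty and does not close the gap. Inside a local model, non-reducedness of $G$ is harmless: invariants of a finite flat group scheme commute with flat base change, and $G^0$ fixes each component of $W^{sh}$. The claim that $h\colon\cY\to\cX\times_{X_{\operatorname{mod}}}Y_{\operatorname{mod}}$ ``should be étale'' is unsupported. Since $\cY\to\cX$ is étale and $h$ is surjective, étaleness of $h$ is equivalent to étaleness of $\cX\times_{X_{\operatorname{mod}}}Y_{\operatorname{mod}}\to\cX$. That is no easier than the étaleness of $Y_{\operatorname{mod}}\to X_{\operatorname{mod}}$ you are trying to prove. Moreover, being representable, proper, bijective on geometric points and on stabilizers does not give an isomorphism: the normalization of a cuspidal curve is a counterexample.

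\textbf{A fix.} Your splitting idea works if you run it on the stack itself rather than on a local model.
\begin{itemize}
\item The non-étale locus of the finite map $Y_{\operatorname{mod}}\to X_{\operatorname{mod}}$ is closed, so it suffices to work at closed points $\xi\in X_{\operatorname{mod}}(k)$.
\item Set $S=\Spec\mathcal{O}^{sh}_{X_{\operatorname{mod}},\xi}$ and $\cX_S=\cX\times_{X_{\operatorname{mod}}}S$. Then $\cX_S$ is connected, since $\pi$ is a universal homeomorphism.
\item Coarse spaces commute with flat base change. Hence a connected component $\cY_j$ of $\cY\times_{X_{\operatorname{mod}}}S$ has as coarse space a connected finite $S$-scheme, which is local with residue field $k$. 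So $\cY_j$ has a single isomorphism class of $k$-points $y$ over $\xi$.
\item The fiber of the finite étale map $\cY_j\to\cX_S$ over the closed point $\bar x$ is an étale $k$-scheme with point set $G_{\bar x}(k)/G_y(k)$. By the hidden-path hypothesis this is a single point.
\item Hence $\cY_j\to\cX_S$ has degree one on the connected $\cX_S$, so it is an isomorphism, and $\cY\times_{X_{\operatorname{mod}}}S\simeq\coprod\cX_S$.
\end{itemize}
Flat base change of coarse spaces then gives both that $Y_{\operatorname{mod}}\to X_{\operatorname{mod}}$ is étale and that $\cY\simeq\cX\times_{X_{\operatorname{mod}}}Y_{\operatorname{mod}}$.

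This argument only uses surjectivity of $G_y(k)\to G_{f(y)}(k)$. Your assertion that this map is an isomorphism of group schemes ``since $f$ is representable'' is not justified: representability only gives a monomorphism. The upgrade to an isomorphism needs étaleness of $f$, through the open-and-closed diagonal.
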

In general, the kernel $N$ is not trivial, but the following result, which is crucial to our proof of \Cref{Theorem: finite presentation for alg stacks}, shows that \(\mathcal{X}\) admits a cover where it is.
\begin{teo}\cite[Theorem 11.4]{Noohi2002FUNDAMENTALGO}\label{Theorem: existence of covering space}
    Let $\cX$ be a Noetherian algebraic stack. Then there exists a covering space $\cY \to \cX$ such that the image of $\omega_y:\pi_1^h(\mathcal{Y},y)\to\pi_1^{et}(\mathcal{Y},y)$ is the identity for all geometric points of $\cY$. 
\end{teo}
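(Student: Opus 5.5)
The plan is to produce $\cY$ as a connected Galois covering attached to a suitable open normal subgroup $U\subseteq\pi_1^{et}(\cX,x)$. The key computation concerns the stabilizers of such a covering. Let $f:\cY\to\cX$ be the connected Galois covering corresponding to $U$, and let $y$ be a point over $x$. Since $f$ is representable, the stabilizer $\pi_1^h(\cY,y)$ injects into $\pi_1^h(\cX,x)$. It consists of those automorphisms of $x$ that fix the chosen point of the fiber $F_x(\cY)=\pi_1^{et}(\cX,x)/U$. Hence
\[
\pi_1^h(\cY,y)=\omega_x^{-1}(U)
\quad\text{and}\quad
\operatorname{im}(\omega_y)=\operatorname{im}(\omega_x)\cap U
\]
inside $U=\pi_1^{et}(\cY,y)$. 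The same holds at every point $x'$ once we transport along a path $x'\to x$; because $U$ is normal, the choice of path is irrelevant. It therefore suffices to find an open normal $U$ with $U\cap B_{x'}=\{1\}$ for all $x'\in\cX(k)$.

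First I show that each $\operatorname{im}(\omega_{x'})$ is finite. The homomorphism $\omega_{x'}$ comes from the morphism $BG_{x'}\to\cX$, where $G_{x'}$ is the stabilizer group. By Noohi's computation for classifying stacks, $\omega_{x'}$ factors through $\pi_1^{et}(BG_{x'})=G_{x'}/G_{x'}^0$, which is a finite group since $G_{x'}$ is of finite type.

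Second, I show that, up to conjugation in $\pi_1^{et}(\cX,x)$, only finitely many such finite subgroups occur. Since $\cX$ is Noetherian, I stratify $|\cX|$ into finitely many connected locally closed reduced substacks $\cX_i$. I choose the stratification so that over each $\cX_i$ the inertia $I_{\cX_i}\to\cX_i$ is flat of finite presentation, and its component group $\pi_0(I_{\cX_i}/\cX_i)$ is a finite étale group scheme. Constructibility of the number of geometric components of fibers, together with Noetherian induction, gives this. The composite $\pi_1^{et}(\cX_i)\to\pi_1^{et}(\cX)$ then reduces the question to $\cX_i$. On a connected stratum, the finite étale group $\pi_0(I)$ is locally constant, and the map from it to the fundamental group varies continuously. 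More concretely, for two points of $\cX_i$ joined by an étale path, the images of $\omega$ correspond under the induced isomorphism of fundamental groups. So each stratum contributes one finite subgroup $H_i$ up to conjugacy, and $N$ is topologically generated by the conjugates of $H_1,\dots,H_r$.

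Finally, the finite set $S=\bigcup_i H_i\setminus\{1\}$ avoids $1$. Since $\pi_1^{et}(\cX,x)$ is profinite, hence Hausdorff with a basis of open normal subgroups at $1$, there is an open normal $U$ with $U\cap S=\emptyset$. By normality, $U$ then meets every conjugate of every $H_i$ trivially, so $U\cap B_{x'}=\{1\}$ for all $x'$. The covering $\cY$ attached to $U$ has $\operatorname{im}(\omega_y)$ trivial at every point, by the first paragraph.

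The main obstacle is the second step. One must make precise that $\operatorname{im}(\omega)$ is constant up to conjugacy along a stratum on which $\pi_0$ of the inertia is finite étale. I would attempt this by pulling back to an étale-locally trivializing finite étale cover of the stratum and comparing the two sections via $BG\to\cX$ for the constant group.
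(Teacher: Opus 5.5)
Your outline is correct and is essentially Noohi's own proof of this result, which the paper only cites. Your strata with flat inertia and finite étale \(\pi_0(I)\) play the role of the monotonous gerbes of \cite[Theorem 11.3]{Noohi2002FUNDAMENTALGO}, the constancy up to conjugacy of \(\operatorname{im}(\omega)\) along a connected stratum is what \cite[Corollary 10.7]{Noohi2002FUNDAMENTALGO} supplies, and you finish as Noohi does, by choosing an open normal subgroup that meets the finitely many resulting finite subgroups trivially.

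Two cautions about the step you flag. First, a representable finite étale cover of the stratum may have strictly smaller stabilizers, so there is no map \(B\Gamma\to\) (cover) for the constant group. The comparison should instead be made over a connected scheme \(T\to\mathcal{X}_i\), for example via \(B_T(I_T)\to\mathcal{X}_i\), or through the locally constant action of \(\pi_0(I)\) on the fibres of each finite Galois cover. Second, your sentence saying the images correspond under the isomorphism induced by a path should say ``up to conjugacy'', since it only holds for suitably chosen paths.
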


\begin{rmk}
    The proof of the above theorem uses the fact that a Noetherian algebraic stack admits a stratification by monotonous gerbes \cite[Definition 10.1 and Theorem 11.3]{Noohi2002FUNDAMENTALGO} and \cite[Corollary 10.7]{Noohi2002FUNDAMENTALGO}, which controls the image of the hidden fundamental groups of locally closed substacks that are monotonous gerbes. If we impose the additional conditions that $\omega_x$ is injective for all geometric points and $\cX$ is Deligne-Mumford, then $\cY$ is an algebraic space and $\cX = [\cY/G]$ for a finite group $G$ (in other words, $\cX$ is uniformizable) \cite[Remark after Theorem 11.4]{Noohi2002FUNDAMENTALGO}.
\end{rmk}

\section{Results on Finite Presentation.}

In this section, we prove the main theorem on topological finite presentation for proper algebraic stacks. Henceforth, finite presentation will always refer to topological finite presentation. 

We first prove the result for algebraic spaces, which we use to prove the main theorem.
\begin{teo}\label{Theorem: finite presentation for proper alg sp}
    Let $X$ be a connected proper algebraic space over an algebraically closed field $k$. Then $\pi_1^{et}(X,x)$ is finitely presented for all geometric points $x \in X$.
\end{teo}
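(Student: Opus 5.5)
We reduce to \Cref{theorem: propervarietiesfinitepresentation} using Chow's lemma for algebraic spaces together with effective descent of finite \'etale covers along proper surjective morphisms, following the method of \cite{Propervarietyfinitepresented}.

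\textbf{Step 1: a proper surjective cover by schemes.} By Chow's lemma for algebraic spaces, there is a proper surjective (birational) morphism \(p\colon X_0 \to X\) with \(X_0\) a projective, hence proper, scheme over \(k\). The fibre products \(X_1 = X_0\times_X X_0\) and \(X_2 = X_0\times_X X_0 \times_X X_0\) are proper algebraic spaces over \(k\). They are not necessarily schemes, since \(X\) need not have a separated diagonal in the scheme sense.

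Because \(p\) is proper, surjective and of finite type, it is a morphism of effective descent for finite \'etale covers (SGA1 Exp.~IX, as used in \cite{Propervarietyfinitepresented}). So \(\operatorname{F\acute{E}t}(X)\) is equivalent to the category of finite \'etale covers of \(X_0\) with descent data over \(X_1\), subject to the cocycle condition on \(X_2\). For this step we only need \(X_1, X_2\) up to surjective proper maps from schemes. Descent data can be checked after pulling back along a proper surjection, since finite \'etale morphisms of pairs form a sheaf for the proper topology. Therefore we may replace \(X_1\) and \(X_2\) by Chow covers \(X_1' \to X_1\) and \(X_2' \to X_2\), which are proper schemes. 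Equivalently, we build a proper hypercovering \(X_\bullet \to X\) truncated at level 2, with every \(X_i\) a proper \(k\)-scheme.

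\textbf{Step 2: the van Kampen-type presentation.} From a 2-truncated proper hypercover by proper schemes with finitely many connected components, the descent formalism of SGA1 Exp.~IX, \S5, gives a presentation of \(\pi_1^{et}(X,x)\). It is the quotient of the free profinite product of the groups \(\pi_1^{et}(X_0^{(i)})\), over the connected components \(X_0^{(i)}\), and a free profinite group on finitely many generators. The extra generators come from the "loops" in the graph of components of \(X_0\) and \(X_1\). The relations come from the images of \(\pi_1^{et}\) of the components of \(X_1\), and the cocycle relations come from the components of \(X_2\).

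Finite presentation then follows by the group-theoretic lemma used in \cite{Propervarietyfinitepresented}. The relevant facts are:
\begin{itemize}
\item each \(\pi_1^{et}(X_0^{(i)})\) is finitely presented by \Cref{theorem: propervarietiesfinitepresentation};
\item each \(\pi_1^{et}(X_1^{(j)})\) is topologically finitely generated by SGA1 Exp.~X, so each contributes finitely many relations (images of generators);
\item there are finitely many components, all connected proper \(k\)-schemes, each contributing finitely many cocycle relations from \(X_2\).
\end{itemize}
A profinite group with finitely many generators and finitely many relations, with profinitely closed normal subgroups, is finitely presented.

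\textbf{Main obstacle.} The hardest point is justifying the descent presentation when \(X_1\) and \(X_2\) are replaced by scheme covers. We must check that the comparison of \(\operatorname{F\acute{E}t}(X)\) with descent data on the modified hypercover is still an equivalence of Galois categories. We must also check that the resulting presentation only involves finite generation at level 1 and connectedness information at level 2.

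I would handle both by directly importing the statement from \cite{Propervarietyfinitepresented}. There the exact-sequence/presentation result is formulated for any proper hypercovering, truncated at level 2, whose terms are proper schemes over \(k\). Its proof uses only effective descent along proper surjections, which holds equally for algebraic spaces because such descent is \'etale local on the base. The remaining work is routine: check that the base point can be chosen compatibly, and that the result does not depend on the choice of geometric point, since \(X\) is connected and paths induce isomorphisms.
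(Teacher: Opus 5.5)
Your overall strategy (Chow's lemma, effective descent of finite \'etale covers along a proper surjection, and the van Kampen presentation of \cite[Exp.~IX, \S5]{SGA1}) is the paper's. However, Step 1 rests on a false claim that sends you on an unnecessary detour. The fibre products $X_1=X_0\times_X X_0$ and $X_2=X_0\times_X X_0\times_X X_0$ \emph{are} schemes. The diagonal of any algebraic space is representable by schemes, so $X_0\times_X X_0=(X_0\times_k X_0)\times_{X\times_k X,\Delta}X$ is a scheme. Since $X$ is proper, hence separated, $\Delta$ is a closed immersion, so $X_1$ is a closed subscheme of $X_0\times_k X_0$ and is proper; likewise for $X_2$. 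Hence the \v{C}ech nerve of $X_0\to X$ already consists of proper $k$-schemes with finitely many connected components, and \cite[Exp.~IX, Corollaire 5.3]{SGA1} applies directly. This is exactly the paper's argument, and your ``main obstacle'' never arises.

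The hypercover detour is not wrong in principle: finite \'etale covers form a stack for proper surjections, and descent for a stack along a $2$-truncated hypercover holds. But it costs you in two ways. First, you then need a hypercover version of the SGA1 presentation, which is not what SGA1 states and which you only import by citation. Second, your construction as written is not yet a hypercover: a Chow cover $X_2'$ of $X_0\times_X X_0\times_X X_0$ need not have face maps to $X_1'$. You would have to take $X_2'$ covering the degree-$2$ term of the $1$-coskeleton of $X_1'\rightrightarrows X_0$.

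Two points in your write-up are sharper than the paper. You correctly observe that the presentation only needs finite presentation at level $0$, finite generation at level $1$ (by \cite[Exp.~X, Th\'eor\`eme 2.9]{SGA1}), and finiteness of $\pi_0$ at level $2$. The paper instead invokes \Cref{theorem: propervarietiesfinitepresentation} at all three levels, which is more than needed. You also note that effective descent along $X_0\to X$, with $X$ only an algebraic space, requires an \'etale-localization argument on $X$. The paper passes over this by citing the scheme statement directly.
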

\begin{proof}
    Let \(X'\to X\) be a proper surjective morphism from a proper scheme (which exists by Chow's lemma for algebraic spaces, \cite[tag 088P]{stacks-project}). As the diagonal of \(X\) is  proper and representable by schemes, it follows that \(X''=X'\times_XX'\) and \(X'''=X'\times_X X'\times_X\times X'\) are proper schemes. By \Cref{theorem: propervarietiesfinitepresentation}, the fundamental groups of the connected components of \(X', X''\) and \(X'''\) are all finitely presented, moreover each of their \(\pi_0\)'s are finite. 
    By \cite[Theoreme 4.12]{SGA1}, the category of finite \'etale covers satisfies descent for proper surjective morphisms of finite type, and by \cite[Exp. IX, Corollarie 5.3]{SGA1} the finite presentation of the fundamental groups of the components of \(X',X'',X'''\) implies that fundamental group of \(X\) also is finitely presented.
\end{proof}

\begin{teo}\label{Theorem: finite presentation for alg stacks}
    Let $\mathcal{X}$ be a connected proper algebraic stack with finite inertia over an algebraically closed field $k$. Then  for any geometric point $x \in \mathcal{X}$, the \'etale fundamental group $\pi_1^{et}(\mathcal{X},x)$ is finitely presented.
\end{teo}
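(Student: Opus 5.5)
We reduce to a covering space on which Noohi's kernel \(N\) vanishes, and then pass from the covering space to its coarse moduli space.

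\emph{Step 1: choose the cover.} Since \(\mathcal{X}\) is proper over \(k\), it is Noetherian. By \Cref{Theorem: existence of covering space} there is a covering space \(\mathcal{Y}\to\mathcal{X}\) such that \(\omega_y\) has trivial image for every geometric point \(y\) of \(\mathcal{Y}\). We may take \(\mathcal{Y}\) connected, as covering spaces are connected by definition.

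\emph{Step 2: compare fundamental groups.} By the Galois formalism, \(\pi_1^{et}(\mathcal{Y},y)\) is an open subgroup of finite index in \(\pi_1^{et}(\mathcal{X},x)\), after choosing a path \(y\to x\). A profinite group with an open finite-index subgroup that is topologically finitely presented is itself topologically finitely presented. This is the profinite analogue of the Reidemeister--Schreier/extension statement, and can be seen via the criterion that \(G\) is finitely presented iff \(H^1,H^2\) with \(\mathbb{F}_p\)-coefficients are controlled (Lubotzky), or more directly by the fact that finite presentability is a commensurability invariant for profinite groups. So it suffices to treat \(\mathcal{Y}\).

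\emph{Step 3: pass to the coarse space.} The morphism \(\mathcal{Y}\to\mathcal{X}\) is finite and representable, so \(\mathcal{Y}\) is again proper over \(k\) with finite inertia. Indeed, \(I_{\mathcal{Y}}\to I_{\mathcal{X}}\times_{\mathcal{X}}\mathcal{Y}\) is a closed immersion because the map is representable and separated. By the Keel--Mori theorem, \(\mathcal{Y}\) has a coarse moduli space \(Y_{\mathrm{mod}}\), which is an algebraic space, proper over \(k\) (properness descends along the proper surjection \(\mathcal{Y}\to Y_{\mathrm{mod}}\)), and connected. In the definition of \(N\), every \(B_{y'}\) is the union of images of the trivial group, hence trivial, so \(N=1\). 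By \Cref{theorem: relation to cms}, \(\pi_1^{et}(\mathcal{Y},y)\cong\pi_1^{et}(Y_{\mathrm{mod}},y)\). The latter is finitely presented by \Cref{Theorem: finite presentation for proper alg sp}, which concludes the argument.

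\emph{Expected main obstacle.} The delicate point is Step 2, the passage from a finite-index open subgroup to the whole group. For abstract groups this is standard: take generators and relators of \(H\) together with coset representatives and the finitely many multiplication relations. In the profinite setting one must check that the resulting finitely presented profinite group maps isomorphically onto \(G\). I would argue by embedding \(G\) as an extension-type construction and using that \(G\) is finitely generated (since \(H\) is) and that the relation module is finitely generated as a \(\mathbb{Z}_p[[G]]\)-module iff it is so as a \(\mathbb{Z}_p[[H]]\)-module. Alternatively, I would cite the known result (e.g. in Lubotzky's work on profinite presentations) that topological finite presentability is inherited by open-subgroup overgroups. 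The remaining checks — properness and connectedness of \(Y_{\mathrm{mod}}\), and finite inertia of \(\mathcal{Y}\) — are routine.
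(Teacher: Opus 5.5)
Your proposal is correct and follows the paper's proof essentially step for step. Both arguments take Noohi's covering space $\mathcal{Y}\to\mathcal{X}$ with trivial $\omega_y$, apply Keel--Mori and Noohi's Theorem 7.11 with $N=1$ to identify $\pi_1^{et}(\mathcal{Y},y)$ with the fundamental group of the proper algebraic space $Y_{\mathrm{mod}}$, and conclude from the fact that a profinite group with a finitely presented open subgroup is finitely presented; the paper cites this last fact as \cite[Proposition 2.3]{obstructionlifttochar0}.

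One small caution: Lubotzky's cohomological criterion bounds $\dim H^2(G,M)$ linearly in $\dim M$ for all finite $\mathbb{F}_p[G]$-modules $M$, not just for trivial $\mathbb{F}_p$ coefficients. Your paraphrase of it is therefore imprecise, but this does not affect the argument, since you ultimately rely on the known open-subgroup result.
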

\begin{proof}
Since \(\mathcal{X}\) is proper over a field, it is in particular Noetherian.
By \Cref{Theorem: existence of covering space}, there exists a covering space $\cY \to \cX$ such that the images of $\omega_y:\pi_1^h(\mathcal{Y},y)\to \pi^{et}_1(\mathcal{Y},y)$ are trivial for all geometric points $y \in \cY$. As $\cY \to \cX$ is finite \'etale and $\cX$ is proper with finite inertia, $\cY$ is proper and has finite inertia. By the Keel-Mori theorem(\cite{keelmoritheorem}),  $\cY$ admits a connected coarse moduli space $p: \cY \to Y$ such that $Y$ is proper.  \\

By \Cref{Theorem: finite presentation for proper alg sp}, $\pi_1^{et}(Y,y)$ is finitely presented for all \(y\in Y(k)\).  As $\omega_y$ is trivial for all \(y\in \mathcal{Y}(k)\), the subgroup $N\subset\pi_1^{et}(\mathcal{Y},y)$ is zero. By \Cref{theorem: relation to cms} we see that $\pi_1^{et}(\cY,y) = \pi^{et}_1(Y,y)$ for all geometric points $y$. Thus, $\pi_1^{et}(\cY,y)$ is finitely presented. As $\cY \to \cX$ is a covering space, we have an injection of fundamental groups
\begin{equation}
    \pi_1^{et}(\cY,y) \hookrightarrow \pi_1^{et}(\cX,x)
\end{equation}

where $\pi_1^{et}(\cY,y)$ is an open subgroup of $\pi_1^{et}(\cX,x)$. Hence by \cite[Proposition 2.3]{obstructionlifttochar0}, $\pi_1^{et}(\cX,x)$ is finitely presented.    
\end{proof}

\printbibliography
\end{document}